\pdfoutput=1

\documentclass[11pt,a4paper]{article}

\usepackage[english]{babel}
\usepackage[numbers,sort&compress]{natbib}
\usepackage[a4paper,top=2cm,bottom=2cm,left=2.6cm,right=2.8cm,marginparwidth=1.75cm]{geometry}

\usepackage[utf8]{inputenc}
\usepackage[T1]{fontenc}

\usepackage{amsmath,amssymb,amsfonts,amsthm,mathtools}
\usepackage{bm}
\usepackage{bbm}
\usepackage{esvect}
\allowdisplaybreaks

\usepackage{graphicx}
\usepackage{booktabs}
\usepackage{adjustbox}
\usepackage{caption}
\usepackage{subcaption}
\usepackage{float}
\usepackage{placeins}

\usepackage{algorithm}
\usepackage{algpseudocode}

\usepackage{tikz}
\usetikzlibrary{decorations.pathreplacing,decorations.markings}
\usepackage{pifont}
\usepackage{enumitem}
\usepackage{verbatim}
\usepackage{authblk}

\usepackage{xcolor}
\usepackage{titlesec}

\definecolor{mycitecolor}{RGB}{70,10,200}
\usepackage[colorlinks=true,linkcolor=blue,citecolor=mycitecolor,urlcolor=black]{hyperref}

\usepackage[nameinlink,noabbrev]{cleveref}

\theoremstyle{myplain}

\newtheorem{theorem}{Theorem}[section]
\newtheorem{lemma}[theorem]{Lemma}
\newtheorem{proposition}[theorem]{Proposition}
\newtheorem{corollary}[theorem]{Corollary}
\newtheorem{conjecture}[theorem]{Conjecture}

\theoremstyle{remark}
\newtheorem{remark}[theorem]{Remark}

\newcommand{\blue}{\mathrm{blue}}
\newcommand{\floor}[1]{\left\lfloor #1\right\rfloor}
\newcommand{\ceil}[1]{\left\lceil #1\right\rceil}

\title{A Sharp Ramsey Theorem for Admissible Colorings\\
of Ordered Cliques}
\author[1]{Yanan Hu \thanks{E-mail: hg@sit.edu.cn}}
\author[2]{Zhenhua Lyu\thanks{Corresponding author. E-mail: lyuzhh@outlook.com
}}
\author[3]{Chenxi Yang\thanks{E-mail: yangchenxi2022@email.szu.edu.cn }}

\affil[1]{\small School of Science, Shanghai Institute of Technology, Shanghai 201418, China}
\affil[2]{School of Science, Shenyang Aerospace University,
Shenyang, 110136, China}
\affil[3]{School of Mathematical Sciences, Shenzhen University,
Shenzhen, 518060, China}
\date{}

\begin{document}
\maketitle
\begin{abstract}
Let \(f(k)\) be the minimum integer \(N\) such that any red--blue edge-coloring of the ordered complete graph on \(N\) vertices contains a set of \(k\) vertices  whose induced coloring is admissible. In this note, we obtain the exact value of $f(k)$ for $k\ge 3$, which confirms a conjecture posed by  Brada\v{c}, Liu, Wu and Xu.
\end{abstract}

\noindent\textbf{Keywords.}
Ordered graph; Ramsey theory; dependency digraph

\medskip
\noindent\textbf{2020 Mathematics Subject Classification.}
05C55, 05C15, 05C20

\section{Introduction}
In \cite{be}, Bollob\'as, Erd\H os and Simonovits obtained a stronger version of the  renowned  Erd\H os-Stone-Simonovits theorem.  Nikiforov \cite{niki} enhanced the above result by replacing the  edge density condition by a weaker clique density condition. More precisely, by the so-called supersaturation result of Erd\H os and Simonovits \cite{es}, any graph with $\varrho_2(G)>1-\frac{1}{r-1}$ not only contains a single $r$-clique, but has positive $K_r$-density $\varrho_r(G)>0$. Nikiforov \cite{niki} showed that a graph having $\varrho_r(G)>0$ already suffices to imply a logarithmic blowup of $K_r$. Brada\v{c}, Liu, Wu and Xu \cite{BradacLiuWuXu} proved that, in several structured graph classes, a fixed positive clique density guarantees not only the logarithmic-size blow-ups of the general Nikiforov theorem, but also complete multipartite blow-ups of size $n/log n$, or even linear size. In particular, the ordered-graph setting exhibits a striking quadratic threshold phenomenon.
In the ordered-graph case, a key ingredient is the auxiliary finite Ramsey-type parameter $f(k)$, for which they proved a quadratic upper bound.

We follow the notation of \cite{BradacLiuWuXu}. In this paper, consecutive vertices always refer to the order induced on the selected vertex set.  The dependency digraph may contain arcs in both directions between two vertices; such a pair of opposite arcs is regarded as a directed cycle of length two.  An \emph{ordered graph} $G_<$ is a graph with a total ordering  $<$ on its vertex set \(V(G)\). For a positive integer \(m\), denote \([m]=\{1,\ldots,m\}\). 
Let $\chi$ be a red-blue edge-coloring of the ordered clique with vertices $v_1<$ $v_2<\ldots<v_N$. Its \emph{dependency digraph} $D=D(\chi)$ is defined on the vertex set $\left\{v_1, \ldots, v_N\right\}$ as follows. For every $i \in[N-1]$ and $j \in[N] \backslash\{i, i+1\}$, if $\chi\left(v_i v_{i+1}\right)=$ red and $\chi\left(v_i v_j\right)=\chi\left(v_{i+1} v_j\right)=$ blue, then $D$ contains the directed edges $\left(v_j, v_i\right)$ and $\left(v_j, v_{i+1}\right)$. We say that $\chi$ is admissible if $D(\chi)$ is acyclic.

For \(k\geq1\),  denote by \(f(k)\)  the minimum integer \(N\)  such that in any red-blue edge-coloring of the
ordered clique on \(N\)  vertices, there exist \(k\) vertices such that the coloring induced on these $k$ vertices is admissible. As noted above, this parameter plays a pivotal role in \cite{BradacLiuWuXu} and is also of independent interest.

In \cite{BradacLiuWuXu}, Brada\v{c} et al.\ formulated the following conjecture.
\begin{conjecture}[\cite{BradacLiuWuXu}]
\label{conj:C}
Let \(k\geq1\) be an integer. Then we have
\[
  f(k)=\left\lfloor\frac{k^2}{4}\right\rfloor+1.
\]
\end{conjecture}

They reported a computer verification of
Conjecture~\ref{conj:C} for \(k\leq6\).
The results of Brada\v{c} et al.\ also imply the following bounds for \(f(k)\).
\begin{theorem}[\cite{BradacLiuWuXu}]
\label{thm:previous}
Let \(k\geq3\) be an integer. Then we have
\[
  \left\lfloor\frac{k^2}{4}\right\rfloor
  < f(k)
  \leq\frac{k^2-k+2}{2}.
\]
\end{theorem}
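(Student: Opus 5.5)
The statement has two inequalities, and I would handle them separately: an explicit construction for the lower bound, and an Erdős--Szekeres-type labelling argument for the upper bound. Two observations are used throughout. First, $D(\chi)$ can only have arcs if some pair of consecutive selected vertices spans a red edge, and some third vertex is blue to both of them. Second, consequently any monochromatic set (all red or all blue) is admissible.

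\emph{Lower bound.} Write $k=m+s$ with $m=\lfloor k/2\rfloor$ and $s=\lceil k/2\rceil$, so that $ms=\lfloor k^2/4\rfloor$. Split $v_1<\dots<v_{ms}$ into $m$ consecutive intervals (blocks) of size $s$. Colour edges inside a block red and edges between blocks blue.

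Take any selected set $S$. The vertices of $S$ lying in one block are consecutive within $S$. Suppose a block contains at least two vertices of $S$. Then it contains a red consecutive pair, and every selected vertex in another block is blue to both endpoints, so it sends arcs to that pair. If two different blocks each contain at least two selected vertices, we get arcs in both directions between them, which is a 2-cycle.

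Conversely, suppose at most one block contains two or more selected vertices. Then every arc goes from outside that block into it, so $D$ is acyclic. Hence an admissible $S$ has size at most $(m-1)+s=k-1$, and therefore $f(k)>\lfloor k^2/4\rfloor$.

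\emph{Upper bound.} Let $N=1+\binom{k}{2}$ and fix a colouring. For each vertex $v$, let $\ell(v)$ be the number of vertices on a longest increasing path ending at $v$ all of whose edges are blue. If some $\ell(v)\ge k$, the $k$ vertices of such a path have only blue consecutive edges, so $D$ is empty and we are done.

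Otherwise the classes $C_t=\{v:\ell(v)=t\}$, $t\in[k-1]$, partition the vertex set. Each $C_t$ is a red clique, since a blue edge $uv$ with $u<v$ forces $\ell(v)>\ell(u)$. Because $N=1+\sum_{t=1}^{k-1}(k-t)$, pigeonhole gives some $t$ with $|C_t|\ge k-t+1$.

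It remains to prove the key lemma: if $|C_t|\ge k-t+1$, then there is an admissible $k$-set. The plan is as follows.
\begin{itemize}
\item Let $w$ be the smallest element of $C_t$.
\item Take a blue path $P=u_1<\dots<u_{t-1}<w$ witnessing $\ell(w)=t$.
\item Form $S=\{u_1,\dots,u_{t-1}\}\cup R$, where $R$ consists of $k-t+1$ vertices of $C_t$ including $w$.
\end{itemize}
The $u_i$ precede $w$, but they need not precede all of $R$ in the global order. The consecutive red pairs of $S$ are then pairs inside $R$, together with pairs formed where path vertices interleave with $R$.

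I expect verifying that $D(S)$ is acyclic to be the main obstacle. The vertices of $R$ are pairwise red, so no vertex of $R$ is a tail of an arc into $R$. Hence the only arcs into $R$ come from path vertices, and I would aim to show that no arc returns from $R$ to a path vertex. If the naive choice of $S$ fails, I would adjust it, using the maximality in the definition of $\ell$ to rule out the offending configurations. Two candidate adjustments are choosing $R$ as an initial segment of $C_t$, and replacing the path by one whose last vertex is as late as possible.
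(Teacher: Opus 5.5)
Your lower bound is correct and is the same block construction as Proposition~\ref{prop:lower}. The reduction of the upper bound is also sound: the ending-path classes $C_t$ are red cliques, and since $\binom{k}{2}+1=1+\sum_{t=1}^{k-1}(k-t)$, pigeonhole gives some $|C_t|\ge k-t+1$.

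The gap is that the key lemma, which carries the entire upper bound, is never proved. You only describe a plan, and you allow that the naive set might fail. The obstacle you anticipate also rests on a false claim. Because $w$ is the smallest element of $C_t$ and $R\subseteq C_t$, every path vertex satisfies $u_i<w\le r$ for all $r\in R$. So no path vertex interleaves with $R$. In $S$, all of $U=\{u_1,\dots,u_{t-1}\}$ precedes all of $R$, and $u_{t-1},w$ are consecutive.

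With this observation the naive choice works at once:
\begin{itemize}
\item The consecutive pairs $u_iu_{i+1}$ and $u_{t-1}w$ are edges of $P$, hence blue.
\item So every red consecutive pair of $S$ lies inside $R$, and every arc of $D(S)$ has its head in $R$.
\item The tail of an arc is blue to both endpoints of its generating pair. It therefore cannot lie in the red clique $R$, so it lies in $U$.
\item Hence every arc goes from $U$ to $R$, and $D(S)$ is acyclic, with $|S|=(t-1)+(k-t+1)=k$. For $t=1$, $S$ is simply an all-red set.
\end{itemize}

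Once this is written out, your route is valid but differs from the paper's. The paper does not reprove this theorem. Instead it derives the stronger bound $f(k)\le\lfloor k^2/4\rfloor+1$ from Lemma~\ref{lem:splicing}.

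Your set is the mirror image of the easy half of that lemma: the path $Q^+$ attached at an extreme vertex of the layer, where no interleaving occurs. The paper additionally splices in a prefix of a longest blue path. That prefix ends at the uncontrolled vertex where the path meets the layer. There, interleaving genuinely occurs, and the paper needs the argument that arcs inside $A$ point to the right. This gains $L-t$ further vertices. It lets AM--GM applied to $N\le Lw$ replace your pigeonhole over $\sum_t(k-t)$, which is what upgrades $\binom{k}{2}+1$ to $\lfloor k^2/4\rfloor+1$.
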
 In this paper, we prove Conjecture~\ref{conj:C} for general $k$. More precisely, our main result is the following.

\begin{theorem}
\label{thm:main}
Let \(k\geq3\) be an integer. Then we have
\[
  f(k)=\left\lfloor\frac{k^2}{4}\right\rfloor+1.
\]
\end{theorem}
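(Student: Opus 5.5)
Since Theorem~\ref{thm:previous} already gives the lower bound $f(k)>\lfloor k^2/4\rfloor$, only the upper bound needs proof. We show that every red--blue coloring $\chi$ of the ordered clique on $N=\lfloor k^2/4\rfloor+1$ vertices contains $k$ vertices inducing an admissible coloring.

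\textbf{Step 1: a sufficient condition for admissibility.} An arc $(v_j,v_i)$ of $D$ arises only from a red edge $v_iv_{i+1}$ between \emph{consecutive} selected vertices, with both $v_iv_j$ and $v_{i+1}v_j$ blue. So a selected set $S$ is admissible in either of two simple cases:
\begin{itemize}
\item[(a)] every consecutive pair in $S$ is blue, so $D$ has no arcs at all;
\item[(b)] the red consecutive pairs of $S$ are arranged so that every arc points ``forward'', for instance towards the red pairs near one end.
\end{itemize}
The plan is to find a large set $S$ of one of these types. Type (a) is a sequence $u_1<\dots<u_m$ with all $u_tu_{t+1}$ blue, i.e.\ a monotone blue path in the ordered sense, with arbitrary chords. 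The second kind of structure should be a red path, where we also control the blue edges that create arcs. A potential function $\Phi(v)=$ ``some monotone red-path length $\times$ some monotone blue-path length'' is the natural analogue of Erd\H{o}s--Szekeres. With $a\cdot b<N$ such labels, the product form gives exactly a $k^2/4$ bound once we arrange that a set of type ``$a$ red $+$ $b$ blue'' with $a+b\ge k$ is admissible.

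\textbf{Step 2: labelling each vertex.} For each $v$, let $r(v)$ be the length of the longest sequence ending at $v$ whose consecutive pairs are red and which is itself admissible (an ``admissible red-ended chain''). Let $b(v)$ be the longest sequence ending at $v$ whose consecutive pairs are blue. We aim to show two things:
\begin{itemize}
\item any admissible sequence ending at $u$ can be extended to $v>u$ when $uv$ is blue, since a blue last edge creates no new source of arcs (only new arc heads via $v$ as a $j$-vertex, and those need checking);
\item a mixed structure works: a chain with a red prefix of length $p$ followed by a blue suffix of length $q$ is admissible, giving $p+q$ vertices.
\end{itemize}
Concretely, define $g(v)$ as the maximum of $p+q$ over such admissible structures ending at $v$. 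Then show that the map $v\mapsto(p(v),q(v))$, with $p$ the red-phase length and $q$ the total length, is injective, restricted to a region of the form $\{(p,q):q\le k-1\}$ whose size is at most $\lfloor k^2/4\rfloor$. The injectivity argument is the pigeonhole step: for $u<v$ with equal labels, whichever colour $uv$ has extends the structure at $u$ to a longer one at $v$.

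\textbf{Main obstacle.} The key difficulty is verifying that these extensions preserve acyclicity. Appending $v$ creates new arcs from $v$ to both endpoints of every red consecutive pair $v_iv_{i+1}$ with $v_iv$ and $v_{i+1}v$ blue. It also creates arcs into $v$ and its predecessor if the new edge is red. I would handle this by the ordering argument: arcs $(v_j,v_i)$ with $j$ after the red pair are harmless if every red pair precedes all blue-phase vertices. A cycle would then require an arc pointing backward into a later red pair from an earlier vertex, and the red phase should be chosen to exclude exactly that. For instance, require inside the red phase that no earlier vertex sees a later red pair entirely in blue; this suggests choosing the red phase greedily (latest possible predecessor).

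I expect balancing this condition with the counting to give precisely the quadratic $\lfloor k^2/4\rfloor$, via $\max_p p(k-p)$. Failing that, I would fall back on induction on $k$: remove a suitable set of about $k/2$ vertices, namely the last vertices of a maximal blue path, and use $f(k)-f(k-1)\le\lceil (k-1)/2\rceil$. This increment matches $\lfloor k^2/4\rfloor-\lfloor (k-1)^2/4\rfloor$, and is proved by showing that an admissible $(k-1)$-set can be extended unless many vertices are blocked.
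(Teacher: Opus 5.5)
There is a genuine gap. The upper bound is never actually proved, and the scheme as sketched cannot reach the sharp constant.

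First, the counting step is wrong. Per-vertex labels constrained only by a bound on the total length lie in a triangle: your region $\{(p,q):q\le k-1\}$ (with $1\le p\le q$) has $\binom{k}{2}$ elements, not at most $\lfloor k^2/4\rfloor$. Injectivity into it would give only $f(k)\le\binom{k}{2}+1=\frac{k^2-k+2}{2}$, which is exactly the bound already in Theorem~\ref{thm:previous}. The value $\max_p p(k-p)$ arises only when two conditions hold together:
\begin{itemize}
\item the labels fill a rectangle $[a]\times[b]$ whose side lengths are two \emph{global} parameters;
\item a \emph{single} admissible set of order $a+b-1$ exists.
\end{itemize}

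Second, you rightly flag the admissibility of the red phase as the main obstacle, but it is left unresolved, and the extension step genuinely fails for red chains. Take $1<2<3<4$ with $12,23,34$ red and $13,14,24$ blue. The chain $\{1,2,3\}$ is admissible, since it has no arcs. Appending $4$ along the red edge $34$ creates the arcs $4\to1$, $4\to2$, $1\to3$, $1\to4$, hence the two-cycle $1\leftrightarrow4$. Also, when the structure at $u$ is already in its blue phase and $uv$ is red, the pigeonhole step has nothing to extend. Your fallback bound $f(k)-f(k-1)\le\lceil(k-1)/2\rceil$ is only asserted.

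The missing idea is to use red \emph{cliques} from a Mirsky-type layering instead of red paths. Let $\sigma(v)$ be the maximum order of an increasing blue path starting at $v$, and set $L=\max_v\sigma(v)$, $S_t=\{v:\sigma(v)=t\}$ and $w=\max_t|S_t|$. Each $S_t$ is a red clique, because a blue edge $uv$ with $u<v$ inside $S_t$ would give $\sigma(u)\ge\sigma(v)+1$. Hence $N\le Lw$, and $N=\lfloor k^2/4\rfloor+1$ forces $L+w\ge k+1$.

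The paper then splices three pieces:
\begin{itemize}
\item the prefix $p_1<\dots<p_{L-t+1}$ of a longest blue path, which meets $S_t$ only in its last vertex;
\item the whole layer $S_t$;
\item a blue path of order $t$ starting at the rightmost vertex of $S_t$.
\end{itemize}
This gives $L+|S_t|-1$ vertices. In this set every red consecutive pair has an endpoint in $S_t$. Consequently:
\begin{itemize}
\item vertices of $S_t$ have out-degree $0$, since a tail in $S_t$ would be blue to another vertex of the red clique $S_t$;
\item the suffix vertices lie in no red consecutive pair, so they have in-degree $0$;
\item any arc $z\to y$ inside the prefix has $z$ blue to some $s\in S_t$ with $\sigma(z)>\sigma(s)$, which forces $z<s$ and therefore $z<y$.
\end{itemize}
Your intuition in Step 1(b), that arcs should all point one way, is exactly what the third point establishes. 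The digraph is therefore acyclic, and endpoint deletion (Lemma~\ref{lem:trimming}) yields an admissible set of exactly $k$ vertices.
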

The lower bound is implicit in Theorem \ref{thm:previous}. Nevertheless, we provide a short direct proof via a balanced interval coloring.

The remainder of the paper is organized as follows.  In Section~2, we collect the necessary preliminary results, including the basic properties of the longest path layers and the endpoint deletion lemma.  In Section~3, we prove the main theorem. We establish the asymmetric path layer splicing lemma, from which the matching upper bound follows.

\section{Preliminaries}
For a graph $G$, we write $V(G)$ for its vertex set.  If $S\subseteq V(G)$, then $G[S]$ denotes the subgraph of $G$ induced by $S$.
Throughout this section, let \(
  V=\{v_1<v_2<\cdots<v_N\}\)
be the vertex set of an ordered complete graph, and let
\( \chi\)
be an arbitrary red--blue edge-coloring. An \emph{increasing blue path of order \(r\)} is a sequence
\[
  u_1<u_2<\cdots<u_r
\]
such that \(\chi(u_iu_{i+1})=\blue\) for every \(i\in[r-1]\).  For a vertex
\(v\), let \(\sigma(v)\) denote the maximum order of an increasing blue path
starting at \(v\).  Put
\[
  L=\max_v\sigma(v),
  \qquad
  S_t=\{v:\sigma(v)=t\}
  \quad (1\leq t\leq L).
\]

\begin{lemma}\label{lem:layers}
Let \(\chi\) be an arbitrary red--blue edge-coloring of the ordered
complete graph \(G\) on \(V\), and let \(L\) and \(S_1,\ldots,S_L\) be defined
as above.  Then the following statements hold.
\begin{enumerate}[label=\textup{(\roman*)}]
  \item The sets \(S_t\) form a partition of \(V\), and each \(S_t\) induces a complete graph without a blue edge.

 \item For all \(i\in[L]\), if
  \(
    P=(p_1<p_2<\cdots<p_L)
  \)
  is an increasing blue path of maximum order, then
  \[
    \sigma(p_i)=L-i+1.
  \]
  Consequently, the path \(P\) meets \(S_t\) in
  exactly one vertex.
\end{enumerate}
\end{lemma}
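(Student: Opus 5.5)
The plan is to prove both parts directly from the definition of \(\sigma\), using one elementary observation: if \(u<w\) and \(\chi(uw)=\blue\), then \(\sigma(u)\geq\sigma(w)+1\). Indeed, prepending \(u\) to a longest increasing blue path starting at \(w\) gives an increasing blue path starting at \(u\) that is one vertex longer. Both statements follow quickly from this monotonicity.

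For (i), every vertex \(v\) has \(1\leq\sigma(v)\leq L\), since the single vertex \(v\) is a blue path of order \(1\). Hence the sets \(S_1,\ldots,S_L\) cover \(V\). They are pairwise disjoint because \(\sigma\) is a function, so they form a partition of \(V\) (some of them could be empty a priori; part (ii) will show they are not). Now take two vertices \(u<w\) in the same \(S_t\). If the edge \(uw\) were blue, the observation would give \(\sigma(u)\geq\sigma(w)+1=t+1\), which is a contradiction. So every edge inside \(S_t\) is red, and \(S_t\) induces a complete graph with no blue edge.

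For (ii), let \(P=(p_1<\cdots<p_L)\) be an increasing blue path of order \(L\). The suffix \(p_i<\cdots<p_L\) is an increasing blue path from \(p_i\) of order \(L-i+1\), so \(\sigma(p_i)\geq L-i+1\). For the reverse inequality, suppose \(\sigma(p_i)>L-i+1\) for some \(i\). Then \(p_1<\cdots<p_{i-1}\), followed by a longest increasing blue path starting at \(p_i\), is an increasing blue path. Its edges are blue because \(p_{i-1}p_i\) is blue and the rest are blue by construction. Its order is \((i-1)+\sigma(p_i)>L\), which contradicts the maximality of \(L\). Hence \(\sigma(p_i)=L-i+1\).

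Since \(i\mapsto L-i+1\) is a bijection from \([L]\) onto \([L]\), the path \(P\) contains exactly one vertex of each \(S_t\). In particular, every \(S_t\) is nonempty. None of these steps poses a real obstacle; the only point needing care is checking that the concatenated path in (ii) is still increasing and blue, which holds because the path from \(p_i\) consists of vertices at least \(p_i\), all larger than \(p_{i-1}\).
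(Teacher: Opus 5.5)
Your proof is correct and follows essentially the same route as the paper: in (i) you prepend a vertex to a longest blue path to rule out blue edges inside \(S_t\), and in (ii) you use the suffix bound together with the prefix-concatenation contradiction. Your extra remarks are small refinements the paper leaves implicit: that \(1\leq\sigma(v)\leq L\) ensures the layers cover \(V\), and that every \(S_t\) is nonempty because some longest path exists.
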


\begin{proof}
We first prove part~\textup{(i)}. The sets \(S_1,\ldots,S_L\) are pairwise disjoint and their union is \(V\), so they
form a partition of the vertex set.
It remains to show that every edge of $G_\chi[S_t]$ is red. 
The assertion is immediate if \(|S_t|\leq1\).  Otherwise, let \(u<v\) be
arbitrary distinct vertices of \(S_t\).  Then
\(
  \sigma(u)=\sigma(v)=t.
\)
There exists an increasing blue path $P$ of order \(t\) starting at \(v\).
Suppose, for a contradiction, that \(uv\) is blue.  Since \(u<v\),
the sequence
\(  uP\)
is an increasing blue path of order \(t+1\) starting at \(u\).  It follows that
\[
  \sigma(u)\geq t+1,
\]
a contradiction.  Therefore \(uv\) is red.

We next prove part~\textup{(ii)}.   For all \(i\in[L]\), the
subpath
\[
  p_i<p_{i+1}<\cdots<p_L
\]
is an increasing blue path of order \(L-i+1\) starting at \(p_i\).
Therefore
\(
  \sigma(p_i)\geq L-i+1.
\)

Suppose that the inequality is strict for some \(i\in[L]\).  By the
definition of \(\sigma(p_i)\), there exists an increasing blue path
\[
  p_i=q_1<q_2<\cdots<q_s
\]
of order
\(
  s=\sigma(p_i)>L-i+1.
\)
Then we obtain an increasing blue path
\[
  p_1<p_2<\cdots<p_{i-1}<p_i=q_1<q_2<\cdots<q_s.
\]
 The order is
\(
  (i-1)+s
  >(i-1)+(L-i+1)
  =L,
\)
a contradiction.  Thus we have 
\[
  \sigma(p_i)=L-i+1.
\]

For each \(t\in[L]\), the unique vertex of \(P\) having \(\sigma\)-value \(t\) is
\(p_{L-t+1}\).  Therefore
\[
  V(P)\cap S_t=\{p_{L-t+1}\}.
\]
\end{proof}

\begin{lemma}\label{lem:trimming}
Let \(\chi\) be a red--blue edge-coloring of an ordered complete graph
\(G\), and let
\[
  X=\{x_1<x_2<\cdots<x_m\}\subseteq V(G).
\]
Suppose that the coloring induced by \(\chi\) on \(X\) is admissible. If \(m\geq2\), then the coloring induced by \(\chi\) on
\(X\setminus\{x_1\}\) and \(X\setminus\{x_m\}\) are both admissible. Consequently, for all \(r\) with \(1\leq r\leq m\), there exists a subset \(Y\subseteq X\) of order exactly \(r\) such that the coloring induced by \(\chi\) on \(Y\) is admissible.
\end{lemma}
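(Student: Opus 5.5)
The plan is to compare the dependency digraph of the restriction directly with that of \(X\), and to show that deleting an endpoint can only delete arcs and never create new ones. Write \(D_X=D(\chi|_X)\), and put \(X'=X\setminus\{x_1\}\) or \(X'=X\setminus\{x_m\}\). I will verify that \(D_{X'}\) is a subdigraph of \(D_X\), after identifying \(V(D_{X'})=X'\subseteq X=V(D_X)\). Since a subdigraph of an acyclic digraph is acyclic, admissibility of \(X\) will then give admissibility of \(X'\).

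The key point is that consecutiveness is preserved when an endpoint is removed. Consider an arc of \(D_{X'}\). By definition, it comes from a pair \(y_i<y_{i+1}\) that is consecutive in the induced order on \(X'\), together with a third vertex \(y_j\in X'\). These satisfy \(\chi(y_iy_{i+1})=\) red and \(\chi(y_iy_j)=\chi(y_{i+1}y_j)=\) blue, and the pair generates the arcs \((y_j,y_i)\) and \((y_j,y_{i+1})\).

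Now suppose \(X'=X\setminus\{x_1\}\). Then \(X'=\{x_2<\cdots<x_m\}\), and its consecutive pairs are exactly the pairs \(x_\ell x_{\ell+1}\) with \(2\le \ell\le m-1\). Each of these is also consecutive in \(X\). The colors are inherited from \(\chi\), and \(y_j\in X\) is distinct from both vertices of the pair. Hence the same triple witnesses the same two arcs in \(D_X\). The case \(X'=X\setminus\{x_m\}\) is symmetric, using the consecutive pairs \(x_\ell x_{\ell+1}\) with \(\ell\le m-2\).

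(This argument would fail for deleting an interior vertex, since that creates a new consecutive pair. That is the only subtle point, and it explains why the lemma is stated for endpoints only.)

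For the consequence, I will argue by downward induction, repeatedly deleting the last vertex, or the first, to pass from \(X\) to an admissible set of each size \(r\) from \(m-1\) down to \(1\). The base case \(r=m\) is just \(Y=X\), and every step is justified by the first part. For completeness I also note that for \(|Y|\le 2\) the dependency digraph has no arcs, because a third vertex \(j\notin\{i,i+1\}\) is required. There is no real obstacle here beyond stating the subdigraph inclusion carefully.
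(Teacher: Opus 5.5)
Your proposal is correct and follows essentially the same route as the paper. Both arguments observe that consecutive pairs of \(X\setminus\{x_1\}\) or \(X\setminus\{x_m\}\) are already consecutive in \(X\), so \(D_\chi(X')\subseteq D_\chi(X)\) and acyclicity is inherited; both then reach every size \(r\) by repeatedly deleting an endpoint.
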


\begin{proof}

Assume that \(m\geq2\), let \(X^-=X\setminus\{x_1\}\).  Every pair of consecutive vertices in
\(X^-\) was already consecutive in \(X\).  Hence every dependency arc
of \(D_\chi(X^-)\) is also an arc of \(D_\chi(X)\), and therefore
\[
  D_\chi(X^-)\subseteq D_\chi(X)[X^-].
\]
Since \(D_\chi(X)\) is acyclic, so is \(D_\chi(X^-)\). It follows that the coloring induced by \(\chi\) on \(X^-\) is admissible.
The same argument, applied to
\(
  X^+:=X\setminus\{x_m\},
\)
shows that the coloring induced by \(\chi\) on \(X^+\) is admissible.

Finally, if \(r=m\), take \(Y=X\).  If \(r<m\), repeatedly delete the
leftmost or rightmost vertex of the current ordered set.  At every
step, the coloring induced on the remaining set is admissible by the
preceding argument.  After \(m-r\) deletions, the resulting subset
\(Y\subseteq X\) has order exactly \(r\), and the coloring induced by
\(\chi\) on \(Y\) is admissible.
\end{proof}

The lower bound below is implicit in the work of Brada\v{c} et al.\cite{BradacLiuWuXu}. It follows by combining \(g(k)\leq f(k)\) with their geometric lower-bound
construction for \(g(k)\), a clique-density threshold associated with forcing almost linear-sized bicliques in complements of ordered graphs with no induced monotone \(P_{2k}\).  For completeness, and to keep the present argument independent of that geometric framework, we give a short direct proof.

\begin{proposition}[\cite{BradacLiuWuXu}]\label{prop:lower}
Let \(k\geq 1\) be an integer. Then we have
\[
  f(k)\geq\floor{\frac{k^2}{4}}+1.
\]
\end{proposition}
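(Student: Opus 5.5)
The plan is to exhibit, for \(N=\floor{k^2/4}\), a red--blue coloring of the ordered clique on \(N\) vertices in which no \(k\)-subset induces an admissible coloring. This shows \(f(k)>\floor{k^2/4}\). The construction is a balanced interval coloring. Put \(a=\floor{k/2}\) and \(b=\ceil{k/2}\), so that \(ab=\floor{k^2/4}\) and \(a+b=k\). Split \(v_1<\cdots<v_N\) into \(a\) consecutive intervals \(I_1<\cdots<I_a\), each of size \(b\). Color an edge red if both ends lie in the same interval and blue otherwise. The cases \(k\le 2\) are trivial, since \(N\le 1<k\) and no \(k\)-subset exists at all, so we may assume \(k\ge 3\).

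The key observation is that two \emph{full} intervals already produce a directed \(2\)-cycle. Let \(Y\) be any vertex set, and suppose two distinct intervals \(I_s\) and \(I_t\) each contain at least two vertices of \(Y\). Since intervals are contiguous in the original order, we can pick \(x<x'\) in \(I_s\cap Y\) that are consecutive in \(Y\), and similarly \(y<y'\) in \(I_t\cap Y\) consecutive in \(Y\). The edge \(xx'\) is red, while \(yx\) and \(yx'\) are blue, because they join different intervals. Hence the dependency digraph of \(Y\) contains the arc \((y,x)\). Symmetrically, \(yy'\) is red and \(xy\), \(xy'\) are blue, so it contains \((x,y)\). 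Thus \(D\) has a cycle of length two, and the coloring on \(Y\) is not admissible.

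It therefore suffices to check, by counting, that every \(k\)-set \(Y\) meets at least two intervals in two or more vertices. If not, at most one interval contributes more than one vertex, and it contributes at most \(b\) vertices. Each of the remaining \(a-1\) intervals contributes at most one vertex. This gives
\[
|Y|\le b+(a-1)=k-1<k,
\]
a contradiction. Hence every \(k\)-subset contains a directed \(2\)-cycle in its dependency digraph, and \(f(k)\ge\floor{k^2/4}+1\).

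The only delicate point is making sure the chosen pairs are genuinely consecutive in the induced order of \(Y\). Choosing two consecutive elements of \(Y\cap I_s\) handles this, because no vertex of \(Y\) outside \(I_s\) can lie between two vertices of the contiguous interval \(I_s\). Apart from this, the argument is routine.
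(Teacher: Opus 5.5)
Your proposal is correct and is essentially the paper's proof: the same balanced interval coloring with $a=\lfloor k/2\rfloor$ red blocks of size $b=\lceil k/2\rceil$, the same count $|Y|\le b+(a-1)=k-1$, and the same directed two-cycle $x\leftrightarrow y$ coming from red consecutive pairs in two different blocks. The only cosmetic difference is in the small cases. You dispose of $k\le 2$ via $N<k$ and let $k=3$ be covered vacuously, whereas the paper handles $1\le k\le 3$ via $f(k)\ge k$.
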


\begin{proof}
First observe that \(f(k)\geq k\) for every \(k\geq1\), since an ordered complete graph on fewer than \(k\) vertices contains no
\(k\)-vertex set.  For \(1\leq k\leq3\), we have
\[
  \floor{\frac{k^2}{4}}+1=k,
\]
and hence the desired inequality follows.  Next we assume that \(k\geq 4\), and let
\[
  a=\floor{\frac{k}{2}},
  \qquad
  b=\ceil{\frac{k}{2}}.
\]
Partition \(ab=\floor{k^2/4}\) ordered vertices into \(a\) consecutive
blocks \(B_1<\cdots<B_a\), each of order \(b\).
Here, \(B_i<B_j\) means that whenever \(i<j\), every vertex of \(B_i\) lies to the left of every vertex of \(B_j\) in the underlying order. Color every edge within a block red and every edge between distinct blocks blue.

Suppose that \(X\) is a set of \(k\) vertices.  Then two blocks each contain at least two vertices of
\(X\). Otherwise, we have
\[
  |X|\leq b+(a-1)=a+b-1=k-1,
\]
a contradiction.   In the first block choose consecutive vertices \(x<x'\) of \(X\),
and in the second choose consecutive vertices \(y<y'\) of \(X\).  These are
also consecutive pairs in the full induced order on \(X\), since the blocks
are intervals.

The edges \(xx'\) and \(yy'\) are red, whereas all four edges between \(\{x,x'\}\) and \(\{y,y'\}\) are blue.  Then the dependency digraph contains the arc \(y\to x\). Similarly, since \(yy'\) is a red consecutive pair and \(x\) is joined
in blue to both \(y\) and \(y'\), it gives the arc \(x\to y\).  Thus \(D_\chi(X)\) contains a directed two-cycle.  No \(k\)-set is admissible in
this coloring, proving the lower bound.
\end{proof}

\section{Proof of the Main Theorem}

We need the following key lemma.

\begin{lemma}\label{lem:splicing}
Let \(K_N\) be an ordered complete graph, and let
\( \chi\) be an arbitrary red--blue edge-coloring of \(K_N\).  For each vertex
\(v\in V(K_N)\), define
\[
  L=\max_{v\in V(K_N)}\sigma(v),
  \qquad
  S_t=\{v\in V(K_N):\sigma(v)=t\}
  \quad (1\leq t\leq L).
\]
Then, for every \(t\in[L]\), there exists a set
\(X\subseteq V(K_N)\) such that
\(
  |X|=L+|S_t|-1
\)
and the coloring \(\chi\) induced on \(X\) is admissible.  In particular, if
\(
  w=\max_{1\leq t\leq L}|S_t|,
\)
then \(\chi\) contains an admissible vertex set of order \(L+w-1\).
\end{lemma}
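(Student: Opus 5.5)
The plan is to build \(X\) by splicing a longest increasing blue path with one layer. Fix \(t\in[L]\) and a maximum increasing blue path \(P=(p_1<\cdots<p_L)\). By Lemma~\ref{lem:layers}(ii), \(P\) meets \(S_t\) in exactly one vertex, namely \(s=p_{L-t+1}\). The natural candidate is
\[
  X=V(P)\cup S_t ,
\]
which has exactly \(L+|S_t|-1\) vertices. The \emph{in particular} clause then follows by taking \(t\) with \(|S_t|=w\). If this candidate turns out to have a cycle, the fallback is an \emph{asymmetric} splice, in the spirit of Lemma~\ref{lem:trimming}. We would keep only the path vertices \(p_i\) with \(i<L-t+1\) that lie to the left of some prescribed point, and those with \(i>L-t+1\) that lie to the right of it. Any lost path vertices would be compensated so that the count stays \(L+|S_t|-1\).

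The main work is to show that \(D_\chi(X)\) is acyclic. We would first classify the red consecutive pairs of \(X\), since only these generate arcs. By Lemma~\ref{lem:layers}(i), two consecutive vertices of \(S_t\) are always red. Two consecutive path vertices \(p_i,p_{i+1}\) are blue. A mixed consecutive pair \(\{p_i,u\}\) with \(u\in S_t\) may be either colour. Next we would record the constraints given by the \(\sigma\)-values. If \(x<y\) and \(xy\) is blue, then \(\sigma(x)\ge\sigma(y)+1\). Hence a vertex \(u\in S_t\) cannot send a blue edge forward to any \(p_j\) with \(j\le L-t+1\). Likewise, a path vertex \(p_i\) with \(\sigma(p_i)\le t\) cannot send a blue edge forward to any vertex of \(S_t\). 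These facts restrict which vertices \(z\) can be blue to both ends of a red consecutive pair.

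With this in hand, we would define a potential \(\phi\) on \(X\), built from \(\sigma\) and the position in the order. The aim is to check that every arc \((z,x)\) strictly increases \(\phi\), that is, \(\phi(z)<\phi(x)\) or the reverse monotone relation; either direction rules out cycles. For example, an arc from a path vertex \(z\) into a red pair inside \(S_t\) should go from \(\sigma(z)\ne t\) to \(\sigma=t\). We would then check whether all arcs can be oriented consistently, for instance from larger \(|\sigma-t|\) towards \(t\). Arcs from an \(S_t\)-vertex \(z\) into a mixed red pair \(\{p_i,u\}\) should be controlled by the monotonicity facts above.

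The main obstacle is exactly these mixed red pairs, together with two-cycles between two vertices of \(S_t\) through mixed pairs. This is where the naive union \(V(P)\cup S_t\) may fail. If it does, we would switch to the asymmetric splice described above. We would choose the splice point as the first vertex of \(S_t\), or as \(s\) itself. The goal is that every vertex of \(S_t\) is separated from the path vertices of smaller \(\sigma\)-value by the order. Then mixed red pairs only arise at one boundary, and the potential argument goes through.
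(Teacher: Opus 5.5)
There is a genuine gap: the set that actually works is never specified, and the set you do specify is not admissible in general. The union \(V(P)\cup S_t\) is exactly the construction that Remark~\ref{rem:asymmetry} refutes. Take \(0<1<\cdots<6\) with blue edges \(02,04,05,15,23,26,35,46\). Then \(P=(0,2,3,5)\) and \(S_2=\{1,3,4\}\). In \(\{0,1,2,3,4,5\}\), the red consecutive pair \(\{0,1\}\), with \(5\) blue to both, gives \(5\to0\). The red consecutive pair \(\{4,5\}\), with \(0\) blue to both, gives \(0\to5\).

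Your fallback does not repair this. Since \(P\) is increasing, keeping prefix vertices left of the splice point and suffix vertices right of it changes nothing when the point is \(p_{L-t+1}\). When the point is the first vertex of \(S_t\), it only deletes prefix vertices; here it gives \(\{0,1,3,4,5\}\), which still contains \(0\leftrightarrow5\). The ``compensation'' that restores the count \(L+|S_t|-1\) is left undefined, yet it is the whole content of the lemma. The goal you state, separating \(S_t\) in the order from the path vertices of smaller \(\sigma\)-value, already holds in this example: the only such vertex, \(5\), lies to the right of \(\max S_2=4\). The union still fails, because of the red boundary pair \(\{4,5\}\) formed by \(\max S_t\) and the next suffix vertex. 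Finally, the potential \(\phi\) is never defined, so no acyclicity argument is actually given.

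The missing idea is to discard the whole suffix \(p_{L-t+2},\ldots,p_L\) and re-root it at the \emph{rightmost} vertex \(s_q\) of \(S_t\). Since \(\sigma(s_q)=t\), there is a blue path \(Q^+=(s_q=q_1<q_2<\cdots<q_t)\), and one takes \(X=\{p_1,\ldots,p_{L-t+1}\}\cup S_t\cup V(Q^+)\). Then \(|X|=(L-t+1)+(|S_t|-1)+(t-1)=L+|S_t|-1\). Both the prefix and the part of \(X\) from \(s_q\) onward are blue paths, and \(s_q\) separates them, so every red consecutive pair of \(X\) has an endpoint in \(S_t\). Acyclicity is then a direct check rather than a potential:
\begin{itemize}
\item A vertex \(z\in S_t\) has out-degree zero. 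An arc out of \(z\) would make \(z\) blue to the \(S_t\)-endpoint of the generating pair, contradicting Lemma~\ref{lem:layers}(i).
\item The vertices \(q_2,\ldots,q_t\) lie in no red consecutive pair, so they have in-degree zero.
\item An arc \(z\to y\) between prefix vertices comes from a red pair \(\{y,s\}\) with \(s\in S_t\) and \(zs\) blue. Here \(\sigma(z)>t=\sigma(s)\) forces \(z<s\), and consecutiveness of \(y,s\) then gives \(z<y\). So these arcs point strictly rightward.
\end{itemize}
Your observations about \(\sigma\) dropping along forward blue edges are exactly the tool for that last step, but they close the argument only after the suffix has been replaced in this way.
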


\begin{proof}
Let \(
  S_t=\{s_1<s_2<\cdots<s_q\},
\)
and take a blue path of maximum order
\[
  P=(p_1<p_2<\cdots<p_L).
\]
By Lemma~\ref{lem:layers}(ii), \(P\) meets \(S_t\) in the unique vertex
\(
  p=p_{L-t+1}.
\)
Let
\(
  P^-=(p_1<\cdots<p_{L-t+1})
\).  Since \(\sigma(s_q)=t\), there is a blue path
of order \(t\) starting at the rightmost vertex of \(S_t\); write it as
\[
  Q^+=(q_1=s_q<q_2<\cdots<q_t).
\]
Define
\begin{equation}\label{eq:X-definition}
  X=V(P^-)\cup S_t\cup V(Q^+).
\end{equation}

We have \(V(P^-)\cap S_t=\{p\}\).  Moreover,
\(Q^+\setminus\{s_q\}\) lies strictly to the right of \(s_q\), whereas
\(P^-\cup S_t\) lies weakly to its left.  Hence
\[
  V(Q^+)\cap\bigl(V(P^-)\cup S_t\bigr)=\{s_q\},
\]
including the case \(p=s_q\).  Therefore
\begin{equation}\label{eq:X-size}
  |X|=(L-t+1)+(q-1)+(t-1)=L+q-1.
\end{equation}

It remains to prove that \(D_\chi(X)\) is acyclic.
Suppose, for a
contradiction, that \(D_\chi(X)\) contains a directed cycle \(C\).  Let
\[
  A=V(P^-)\setminus S_t,
  \qquad
  B=V(Q^+)\setminus S_t.
\]
By the intersection properties established above, we have
\(
  X=A\cup\,S_t\,\cup\,B.
\)

We observe that, whenever two consecutive vertices of \(X\) are joined by a red edge, at least one of them belongs to \(S_t\).  Indeed, let \(u<v\) be consecutive vertices
of \(X\) with \(u,v\notin S_t\).  If \(u,v\in A\), then they must be
consecutive vertices of \(P^-\), since every vertex of \(P^-\) belongs
to \(X\).  Hence \(uv\) is blue.  Similarly, if \(u,v\in B\), then
they are consecutive vertices of \(Q^+\), and again \(uv\) is blue.
A vertex of \(A\) and a vertex of \(B\) cannot be consecutive in \(X\),
because \(s_q\in S_t\) lies between them.

We next show that \(C\) cannot contain a vertex of \(S_t\).  In fact,
every vertex of \(S_t\) has out-degree zero in \(D_\chi(X)\).  Suppose
that \(z\to y\) is a dependency arc with \(z\in S_t\).  The red
consecutive pair generating this arc contains \(y\), and, by the
preceding observation, it has an endpoint \(s\in S_t\).  Since \(z\)
is the third vertex generating the arc, \(z\neq s\), and \(zs\) must
be blue.  This contradicts Lemma~\ref{lem:layers}(i), according to
which every edge within \(S_t\) is red.  Thus no vertex of \(S_t\)
has positive out-degree, and consequently \(C\cap S_t=\varnothing\).

Similarly, \(C\) cannot contain a vertex of \(B\).  Since \(s_q\) is
the rightmost vertex of \(S_t\), the terminal segment of \(X\) is
\[
  s_q=q_1<q_2<\cdots<q_t.
\]
Every edge \(q_iq_{i+1}\) is blue.  Hence no vertex of
\(B=\{q_2,\ldots,q_t\}\) is an endpoint of a red consecutive pair in
\(X\).  It follows that every vertex of \(B\) has in-degree zero in
\(D_\chi(X)\), so \(C\cap B=\varnothing\).

Therefore every vertex of \(C\) belongs to \(A\).  We now show that
every dependency arc with both endpoints in \(A\) points strictly to
the right.  Let \(z\longrightarrow y
  \)
be such an arc.  The red consecutive pair generating this arc contains
\(y\).  Since every red consecutive pair has an endpoint in \(S_t\),
this pair must be of the form \(\{y,s\}\) for some \(s\in S_t\).
Moreover, \(z\) is joined in blue to both endpoints of this pair, so
\(zs\) is blue.

By Lemma~\ref{lem:layers}(ii), every vertex of \(A\) has
\(\sigma\)-value greater than \(t\).  In particular,
\[
  \sigma(z)>t=\sigma(s).
\]
Since \(zs\) is blue, we get \(z<s\). The consecutiveness of \(y\) and \(s\) in \(X\) prevents the third selected vertex \(z\) from
lying strictly between them. Since \(z<s\), we obtain \(z<y\).
Consequently, every dependency arc with both endpoints in \(A\) points
strictly from left to right. Then we have \(C\cap A=\varnothing\), which is impossible.  Therefore the coloring induced by \(\chi\) on \(X\) is
admissible.
\end{proof}

\begin{remark}\label{rem:asymmetry}
The direct union of an arbitrary longest blue path and a largest layer need
not be admissible.  For example, on \(0<1<\cdots<6\), color exactly
\[
  02,04,05,15,23,26,35,46
\]
blue.  The unique longest blue path is \((0,2,3,5)\), and the largest layer is
\(S_2=\{1,3,4\}\).  Their union has the directed two-cycle
\(0\leftrightarrow5\).  Lemma~\ref{lem:splicing} instead keeps the prefix
\((0,2,3)\) and starts a new blue path \((4,6)\) at the rightmost layer point,
producing the admissible set \(\{0,1,2,3,4,6\}\).
\end{remark}

\begin{proof}[Proof of Theorem~\ref{thm:main}]
The lower bound follows from Proposition~\ref{prop:lower}.  For the upper bound, let
\[
  N=\floor{\frac{k^2}{4}}+1
\]
and consider an arbitrary red--blue coloring of the ordered complete graph on
\(N\) vertices.  Let \(L\) be the maximum blue-path order and let
\(w=\max_t|S_t|\).  The layers partition the vertex set, and therefore
\begin{equation}\label{eq:N-Lw}
  N=\sum_{t=1}^{L}|S_t|\leq Lw.
\end{equation}

If \(L+w\leq k\), then the integer form of the arithmetic--geometric mean
inequality gives
\[
  N\leq Lw
  \leq\max_{a+b\leq k}ab
  =\floor{\frac{k^2}{4}},
\]
contrary to the definition of \(N\).  Thus \(L+w\geq k+1\).
Lemma~\ref{lem:splicing} supplies an admissible set of order
\[
  L+w-1\geq k.
\]
By Lemma~\ref{lem:trimming}, repeated endpoint deletion produces an
admissible set of order exactly \(k\).  Hence
\[
  f(k)\leq\floor{\frac{k^2}{4}}+1,
\]
which, together with Proposition~\ref{prop:lower}, proves the theorem.
\end{proof}

Recall from~\cite{BradacLiuWuXu} that \(g(k)\) is the minimum integer
\(r\) such that every \(n\)-vertex ordered graph \(G\) with no induced
monotone \(P_{2k}\) and with positive \(K_r\)-density in
\(\overline{G}\) contains a biclique in \(\overline{G}\) whose two
parts have order \(\Omega(n/\log n)\). Brada\v{c} et al. proved that
\[
  \left\lfloor \frac{k^2}{4} \right\rfloor
  < g(k) \le f(k).
\]
Consequently, Theorem~\ref{thm:main} also determines the exact value of
\(g(k)\).

\begin{corollary}\label{cor:exact-g}
Let \(k\ge 3\) be an integer. Then we have
\[
  g(k)=f(k)
      =\left\lfloor \frac{k^2}{4} \right\rfloor+1.
\]
\end{corollary}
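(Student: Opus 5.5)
The corollary follows by sandwiching $g(k)$ between the bounds already available and then invoking Theorem~\ref{thm:main}. The two inputs are the inequality chain
\[
  \left\lfloor \frac{k^2}{4}\right\rfloor < g(k)\le f(k)
\]
quoted from \cite{BradacLiuWuXu} just before the statement, and the exact value $f(k)=\floor{k^2/4}+1$ for $k\ge3$ from Theorem~\ref{thm:main}.

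The key steps are as follows. Fix $k\ge3$.
\begin{enumerate}
  \item Since $g(k)$ is an integer strictly greater than the integer $\floor{k^2/4}$, we get $g(k)\ge \floor{k^2/4}+1$.
  \item From $g(k)\le f(k)$ and Theorem~\ref{thm:main} we get $g(k)\le f(k)=\floor{k^2/4}+1$.
  \item Combining the two inequalities gives $g(k)=f(k)=\floor{k^2/4}+1$.
\end{enumerate}

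There is no real obstacle. The only point to check is that both cited inequalities of Brada\v{c} et al.\ hold in the range $k\ge3$. This range matches the hypothesis of Theorem~\ref{thm:previous}, and Theorem~\ref{thm:main} is also stated for $k\ge3$, so the corollary inherits exactly that range. One also needs $g(k)$ to be integer-valued, which holds because it is defined as a minimum integer $r$.
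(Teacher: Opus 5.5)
Your proposal is correct and takes the same approach as the paper. The paper also sandwiches $g(k)$ between the bound $\lfloor k^2/4\rfloor < g(k)\le f(k)$ of Brada\v{c} et al.\ and the exact value of $f(k)$ from Theorem~\ref{thm:main}, and your remark on integrality makes explicit the step the paper leaves implicit, namely that the strict lower bound gives $g(k)\ge\lfloor k^2/4\rfloor+1$.
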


\section*{Acknowledgements}

 This work was supported by Science and Technology Commission of Shanghai Municipality (No. 25ZR1402474) and by the Basic Scientific Research Project of the Liaoning Provincial
Department of Education (No. 320226062).

\section*{Conflicts of Interest}
The authors declare no conflicts of interest.

\section*{Data Availability Statement}
No data was used for the research described in the paper.

\end{document}